\documentclass{amsart}

\usepackage{newtxtext,newtxmath}
\usepackage[papersize={17cm,24cm},text={12.5cm,19.5cm},centering]{geometry}

\usepackage[initials,lite]{amsrefs}

\newtheorem{theorem}{Theorem}
\newtheorem{proposition}{Proposition}

\theoremstyle{definition}
\newtheorem{definition}{Definition}

\newcommand{\CA}{\mathcal{A}}
\newcommand{\g}{\mathfrak{G}}
\newcommand{\h}{\mathfrak{H}}
\newcommand{\dbar}{\bar{\partial}}
\newcommand{\Linf}{$L_\infty$}
\newcommand{\bull}{\bullet}
\newcommand{\eps}{\varepsilon}

\DeclareMathOperator{\MC}{\mathsf{MC}}
\DeclareMathOperator{\ad}{ad}
\DeclareMathOperator{\End}{End}

\begin{document}

\title[Maurer--Cartan elements and homotopical perturbation
theory]{Maurer--Cartan Elements and Homotopical Perturbation Theory}

\author{Ezra Getzler}

\address{Department of Mathematics, Northwestern University, Evanston,  Illinois, USA}

\email{getzler@northwestern.edu}

\thanks{This paper was written while the author was partially
  supported by a Fellowship in Mathematics of the Simons Foundation.}

\begin{abstract}
  Let $L$ be a (pro-nilpotent) curved \Linf-algebra, and let
  $h:L\to L[-1]$ be a homotopy between $L$ and a subcomplex $M$. Using
  homotopical perturbation theory, Fukaya constructed from this data a
  curved \Linf-structure on $M$. We prove that projection from $L$ to
  $M$ induces a bijection between the set of Maurer--Cartan elements
  $x$ of $L$ such that $hx=0$ and the set of Maurer--Cartan elements
  of $M$.
\end{abstract}

\maketitle

Homological perturbation theory is concerned with the following
situation, which we call a \textbf{context}: a pair of complete
filtered cochain complexes $(V^\bull,\delta)$ and $(W^\bull,d)$,
filtered morphisms of complexes $f:W\to V$ and $g:V\to W$ such that
$gf=1_W$, and a map $h:V\to V[-1]$, compatible with the filtration,
such that
\begin{equation*}
  1_V = fg + \delta h + h\delta .
\end{equation*}
In addition, we assume the \textbf{side conditions}
\begin{equation*}
  h^2 = hf = gh = 0 .
\end{equation*}
For a review of this subject and its history, see Gugenheim and Lambe
\cite{GL}.

If $\mu:V\to V[1]$ is a deformation of the differential on $V$, in
the sense that $\delta_\mu^2=0$, where $\delta_\mu=\delta+\mu$, and if
$\mu$ has strictly positive filtration degree, then $\mu$ induces a
deformation of the above context, with
\begin{align*}
  h_\mu &= \sum_{n=0}^\infty (-h\mu)^nh &
  d_\mu &= d + \sum_{n=0}^\infty g(-\mu h)^n\mu f \\
  f_\mu &= \sum_{n=0}^\infty (-h\mu)^nf &
  g_\mu &= \sum_{n=0}^\infty g(-\mu h)^n .
\end{align*}
These expressions are convergent, by the hypothesis that $\mu$ has
strictly positive filtration degree and that the filtrations on $V$
and $W$ are complete.

Homotopical perturbation theory considers a more general perturbation,
in which $\mu$ is not only a perturbation of the differential on $V$,
but deforms $V$ to a non-trivial algebraic structure, such as an
associative algebra, commutative algebra, or Lie algebra. The outcome
is a \textbf{homotopy} algebraic structure of the same type on $W$,
and a morphism $F_\mu$ of homotopy algebras from $W$, with the
transported structure, to $V$. For associative, respectively
commutative and Lie algebras, a homotopy algebra is known as an
$A_\infty$-algebra, $C_\infty$-algebra and $L_\infty$-algebra
respectively. This theory was pioneered by Kadeishvili
\cite{Kadeishvili}, in the special case of associative algebras, and
under the assumption that the differential $d$ on $V$ and the
deformation of the differential $\delta$ on $W$ vanish. The case of
Lie algebras was taken up by Fukaya \cite{Fukaya}: he generalizes the
construction considerably, permitting the deformation on $V$ to be a
\textbf{curved \Linf-algebra}. For an overview of homotopical
perturbation theory in this context and others, see also Bandiera
\cite{Bandiera}, Berglund \cite{Berglund} and Loday--Vallette
\cite{LV}.

In this paper, we work in Fukaya's setting. A complete filtered
complex is a complex $(L,\delta)$ with a decreasing filtration of
finite length
\begin{equation*}
  L = F_0L \supset F_2L \supset F_3L \supset \dots
\end{equation*}
which is a complete metric space with respect to the metric
\begin{equation*}
  d_c(x,y) = \inf \{ c^{-k} \mid x-y \in F_kL \} .
\end{equation*}
Here, $c$ may be any real number greater than $1$.

A curved \Linf-algebra $\g^\bull$ is a graded vector space with a
complete filtration, together with a sequence of graded antisymmetric
brackets
\begin{equation*}
  [x_1,\dots,x_n]_n :\g^{i_1}\times \g^{i_n}\to \g^{i_1+\dots+i_n+2-n}
\end{equation*}
of degree $2-n$, $n\ge0$, satisfying certain relations which we will
recall below. Here,
\begin{equation*}
  [x_1,x_2]=[x_1,x_2]_2:\g^i\times \g^j\to \g^{i+j} ,
\end{equation*}
generalizes the bracket of graded Lie algebras,
\begin{equation*}
  \delta x_1=[x_1]_1:\g^i\to \g^{i+1}
\end{equation*}
is an operator analogous to a differential, and $R=[~]_0\in \g^2$ is
the curvature of $\delta$, in the sense that
\begin{equation*}
  \delta^2x = [R,x] .
\end{equation*}
Following Fukaya, we assume that $R\in F_1\g^2$ is an element of
strictly positive filtration degree.

A Maurer--Cartan element of a curved \Linf-algebra is an element $x$
of $F_1\g^1$ satisfying the equation
\begin{equation*}
  \sum_{n=0}^\infty \frac{1}{n!} \, [x,\dots,x]_n = 0 .
\end{equation*}
The sum makes sense because the filtration degree of the terms
$[x,\dots,x]_n$ converges to infinity with $n$, owing to the
hypothesis that the filtration degree of $x$ is strictly
positive. Denote by $\MC(\g)$ the set of Maurer-Cartan elements of
$\g$.

Now suppose that $\g$ is endowed in addition with a homological
perturbation theory context
\begin{equation*}
  f:(\h,d)\rightleftarrows(\g,\delta):g ,
\end{equation*}
with homotopy $h:\g\to\g[-1]$. Fukaya associates to these data a
curved \Linf-structure on $\h$. Consider the sets $\MC(\h)$ and
\begin{equation*}
  \MC(\g,h) = \{ x \in \MC(\g) \mid hx=0 \} .
\end{equation*}
We call $\MC(\g,h)$ the Kuranishi set of $(\g,h)$. The goal of this
paper is to give a self-contained proof of the following result.
\begin{theorem}
  \label{main}
  The morphism $g$ induces a bijection from $\MC(\g,h)$ to $\MC(\h)$.
\end{theorem}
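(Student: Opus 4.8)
The plan is to exhibit an explicit two-sided inverse to $g$, rather than to analyze $\MC(\h)$ intrinsically. Homotopical perturbation theory produces, alongside Fukaya's transported structure on $\h$, an \Linf-morphism $F\colon\h\to\g$ whose linear term is $f$ and whose higher Taylor coefficients $F_n$ ($n\ge2$) each carry the homotopy $h$ at the root of their defining trees. Pushing a Maurer--Cartan element forward gives a map $F_*\colon F_1\h^1\to F_1\g^1$, $F_*(y)=\sum_{n\ge1}\tfrac1{n!}F_n(y,\dots,y)$, which converges because $y$ has strictly positive filtration degree. I would first record that $x=F_*(y)$ is the unique solution of the fixed-point equation
\[
  x = fy - h\Bigl( R + \sum_{n\ge2}\tfrac{1}{n!}[x,\dots,x]_n \Bigr),
\]
existence and uniqueness following from the contraction mapping principle, since the bracket terms strictly raise filtration degree and the filtration is complete. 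Writing $Q(x)=\sum_{n\ge0}\tfrac1{n!}[x,\dots,x]_n$ for the left-hand side of the Maurer--Cartan equation, the goal is then to show that $g$ and $F_*$ restrict to mutually inverse bijections between $\MC(\g,h)$ and $\MC(\h)$.

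The formal half of this follows directly from the side conditions $h^2=hf=gh=0$ together with $gf=1_\h$. Applying $h$ to the fixed-point equation gives $hF_*(y)=0$, so $F_*$ lands in $\{x\mid hx=0\}$; applying $g$, and using the identity $x=fgx+h\delta x$ that results from $1=fg+\delta h+h\delta$ and $hx=0$, gives $gF_*(y)=y$. A short manipulation of the same two relations also yields $h\,Q(x)=0$. Conversely, if $x\in\MC(\g,h)$, then $hx=0$ and $Q(x)=0$; solving the Maurer--Cartan equation for $\delta x=-R-\sum_{n\ge2}\tfrac1{n!}[x,\dots,x]_n$ and substituting into $x=fgx+h\delta x$ shows that $x$ solves the fixed-point equation with $y=gx$, whence $x=F_*(gx)$ by uniqueness. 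Thus $F_*\circ g=\mathrm{id}$ on $\MC(\g,h)$ and $g\circ F_*=\mathrm{id}$ on the image of $F_*$.

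The substantive step is to show that $F_*(y)\in\MC(\g)$ whenever $y\in\MC(\h)$, i.e.\ that $e:=Q(x)$ vanishes for $x=F_*(y)$. Two structural inputs drive this. First, twisting the brackets of $\g$ by the degree-one element $x$ produces a curved \Linf-structure whose curvature is $Q(x)$, and the lowest structure relation of the twisted algebra is the Bianchi identity $\delta^x Q(x)=0$, where $\delta^x(\cdot)=\sum_{n\ge0}\tfrac1{n!}[x,\dots,x,\cdot]_{n+1}$ is the twisted differential. Second, because $F$ is an \Linf-morphism and $g$ is the root of the transported brackets, there is a compatibility $g\,Q(F_*(y))=Q_\h(y)$ with the Maurer--Cartan function $Q_\h$ of Fukaya's structure; verifying this by matching the tree expansions of the two sides is the main combinatorial labor. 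Granting these, $y\in\MC(\h)$ gives $Q_\h(y)=0$, hence $ge=0$; we already have $he=0$; and $\delta^x e=0$ lets us write $\delta e=-\sum_{n\ge1}\tfrac1{n!}[x,\dots,x,e]_{n+1}$. Feeding this into $e=fge+\delta he+h\delta e$ and using $ge=he=0$ collapses the identity to $e=-h\sum_{n\ge1}\tfrac1{n!}[x,\dots,x,e]_{n+1}$; the right-hand side is linear in $e$ and strictly raises filtration degree, so by completeness its only fixed point is $e=0$, i.e.\ $Q(x)=0$.

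Combining the two halves, $g$ does map $\MC(\g,h)$ into $\MC(\h)$: for $x\in\MC(\g,h)$ we have $x=F_*(gx)$, so the compatibility gives $Q_\h(gx)=g\,Q(x)=0$. Since $g$ and $F_*$ are then mutually inverse there, the theorem follows. I expect the main obstacle to be the crux of the preceding paragraph, and above all the compatibility $g\,Q(F_*(y))=Q_\h(y)$, since this is the point at which Fukaya's explicit transported structure and its tree combinatorics must be confronted in a genuinely self-contained way; the two contraction-mapping arguments and the bookkeeping with the side conditions are routine by comparison, depending only on completeness of the filtration and the positive filtration degree of $R$ and of the Maurer--Cartan elements.
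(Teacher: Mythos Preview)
Your argument is correct and follows the same architecture as the paper: construct $F$ as the unique fixed point of $F=f-h\lambda\bull F$, use the side conditions to get $hF_*(y)=0$ and $gF_*(y)=y$, and for $x\in\MC(\g,h)$ recover $x=F_*(gx)$ from the fixed-point equation (the paper phrases this last step as $x-F(gx)=h\lambda(F(gx))-h\lambda(x)$ and invokes the contraction property of $-h\lambda$, which is equivalent to your use of uniqueness).

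The one place you diverge is in what you call the ``substantive step''. The paper does not prove $Q(F_*(y))=0$ directly; instead it first proves Fukaya's theorem that $F$ is an \Linf-morphism (via the fixed-point equation $\alpha=-h(\lambda\circ_F\alpha)$ for the obstruction $\alpha$), and then $F_*$ carries $\MC(\h)$ into $\MC(\g)$ by functoriality of $\MC$. Your Bianchi-plus-contraction argument is a valid pointwise specialization of that same proof of Fukaya's theorem, so nothing is lost, but you are redoing work the paper has already packaged. You also overestimate the difficulty of the compatibility $gQ(F_*(y))=Q_\h(y)$: this is immediate from the definitions $\mu=g\lambda\bull F$ and $gF=1_M$ together with $g\delta=dg$, since $gQ(F\bull y)=g\delta(F\bull y)+g\lambda\bull F\bull y=d(gF\bull y)+\mu\bull y=dy+\mu\bull y=Q_\h(y)$; no tree combinatorics are required.
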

The \Linf-morphism from $\h$ to $\g$ constructed by Fukaya induces a
map from $\MC(\h)$ to $\MC(\g)$, whose image is actually seen by
inspection to lie in $\MC(\g,h)$. Thus, our task will be to show that
$g$, on restriction to $\MC(\g,h)$, gives an inverse to this map.

In \cite{Kuranishi}, Kuranishi considers the following analogue of the
above situation. The differential graded Lie algebra $\g^\bull$ is the
Dolbeault resolution $\CA^{0,\bull}(X,T)$ of the sheaf of holomorphic
vector fields on a compact complex manifold $X$, the complex
$\h^\bull$ is the subspace of harmonic forms
\begin{equation*}
  \mathcal{H}^{0,\bull}(X,T) \subset \CA^{0,\bull}(X,T)
\end{equation*}
with respect to a choice of Hermitian metric on $X$, and has vanishing
differential, $f$ is the inclusion, $g$ is the orthogonal projection
with respect to the $L^2$-inner product, and $h$ is the operator
\begin{equation*}
  h = \bigl(\dbar^*\dbar+\dbar\dbar^*\bigr)^{-1} \dbar^* .
\end{equation*}
We have neglected the Banach completions which are needed in order to
make sense of the infinite sums in the formulas, and are considerably
more complicated to describe than the non-Archimedean Banach spaces,
or complete filtered vector spaces, considered in this paper. For
example, one may take the completion of $\CA^{0,i}(X,T)$ consisting of
differential forms whose coefficients are in the Sobolev space of
functions with square-integrable derivatives up to order
$\dim_{\mathbb{C}}(X)-i+1$.

In this setting, $\MC(\g)$ is the space of solutions of the
Kodaira--Spencer equation on $\CA^{0,1}(X,T)$,
\begin{equation*}
  \dbar x + \tfrac{1}{2} [x,x] = 0 ,
\end{equation*}
and $\MC(\g,h)$ is the space of solutions of the Kodaira--Spencer
equation that also satisfy the Kuranishi gauge condition
\begin{equation*}
  \dbar^*x = 0 .
\end{equation*}
Kuranishi shows that this subset is a slice for the foliation of
$\MC(\g)$ induced by the action of $\Gamma(X,T)=\CA^{0,0}(X,T)$ on
$\CA^{0,1}(X,T)$: the tangent space to the leaf through $x\in\MC(\g)$
is the subspace
\begin{equation*}
  \bigl\{ \dbar z + [x,z] \mid z \in \Gamma(X,T) \bigr\} .
\end{equation*}
In the algebraic setting, this was proved (and considerably
generalized) in \cite{linf}, in the case where the filtration on $L$
is finite, and extended to the case of complete filtrations by
Dolgushev and Rogers \cite{DR}, with essentially the same proof.

We now turn to the proof of Theorem~\ref{main}. For technical reasons,
we shift the degrees of our curved \Linf-algebras down by one: this
has the effect of giving all of the brackets $[x_1,\dots,x_n]_n$
degree one, and considerably simplifies the signs arising in the
formulas. Introduce the shifted brackets on $L=\g[1]$: if
$x_j\in L^{i_j}$ and $sx_j\in\g^{i_j+1}$ correspond to each other
under the identification of $\g$ as the suspension of $L$, then
\begin{equation*}
  \lambda_n(x_1,\dots,x_n) = (-1)^{\sum_{i=1}^n (n-i)|x_i|} \,
  [sx_1,\dots,sx_n]_n .
\end{equation*}
The shifted brackets on $L$ are graded symmetric, and in terms of
them, the Maurer--Cartan equation on $F_1L^0$ becomes
\begin{equation*}
  \sum_{n=0}^\infty \frac{1}{n!} \, \lambda_n(x,\dots,x) = 0 .
\end{equation*}

Given a pair $L$ and $M$ of complete filtered complexes, we consider
the complete filtered complex $S^{n,*}(L,M)$, where
\begin{equation*}
  S^{n,i}(L,M)
  = \{ \text{filtered graded symmetric $n$-linear maps from
    $L$ to $M$ of degree $i$} \} .
\end{equation*}
Here, an $n$-linear map $a_n$ is graded symmetric if for all
$1\le j<n$,
\begin{equation*}
  a_n(x_1,\dots,x_{j+1},x_j,\dots,x_n) = (-1)^{|x_j|\,|x_{j+1}|} \,
  a_n(x_1,\dots,x_j,x_{j+1},\dots,x_n) ,
\end{equation*}
filtered if
\begin{equation*}
  a_n(F_{k_1}L,\dots,F_{k_n}L) \subset F_{k_1+\dots+k_n}M ,
\end{equation*}
and has degree $i$ if
\begin{equation*}
  |a_n(x_1,\dots,x_n)| = |x_1| + \dots + |x_n| + i .
\end{equation*}
Let $S^*(L,M)$ be the complex of inhomogeneous multilinear maps
\begin{equation*}
  S^i(L,M) = F_1M \times \prod_{n=1}^\infty S^{n,i}(L,M) \subset
  \prod_{n=0}^\infty S^{n,i}(L,M) ,
\end{equation*}
with filtration
\begin{equation*}
  F_kS^i(L,M) = \{ (a_0,a_1,\dots) \in S^i(L,M) \mid
  a_n(F_{k_1}L,\dots,F_{k_n}L) \subset F_{k_1+\dots+k_n+k}M \} .
\end{equation*}

There is a binary operation from $S^i(L,M)\times S^j(L,L)$ to
$S^{i+j}(L,M)$, defined by the formula
\begin{equation*}
  (a\circ b)_n(x_1,\dots,x_n) = \sum_{\sigma\in S_n} (-1)^\eps \sum_{k=0}^n
  \frac{1}{k!(n-k)!} \, a_{n-k+1}(b_k(x_{\sigma_1},\dots),\dots,x_{\sigma_n}) .
\end{equation*}
For example,
\begin{equation*}
  (a\circ b)_0 = a_1(b_0)
\end{equation*}
and
\begin{equation*}
  (a\circ b)_1(x) = a_2(b_0,x) + a_1(b_1(x)) .
\end{equation*}
This product satisfies Gerstenhaber's pre-Lie algebra axiom:
\begin{equation}
  \label{pre-Lie}
  (a\circ b)\circ c - (-1)^{|b|\,|c|} \, (a\circ b)\circ c
  = a\circ(b\circ c) - (-1)^{|b|\,|c|} \, a\circ(b\circ c) .
\end{equation}

If $\lambda\in S^1(L,L)$ is an element of degree $1$, consider the
operation
\begin{equation*}
  \delta_\lambda f = \delta f + \lambda\circ f - (-1)^{|f|} \, f\circ\lambda .
\end{equation*}
By \eqref{pre-Lie}, we have
\begin{equation*}
  \delta^2 f = (\delta\lambda+\lambda\circ\lambda)\circ f -
  f\circ(d\lambda+\lambda\circ\lambda) .
\end{equation*}

\begin{definition}
  A curved \Linf-algebra is a filtered complex $L$ together with an
  element $\lambda\in S^1(L,L)$ such that
  $\delta\lambda+\lambda\circ\lambda=0$. It is an \Linf-algebra if its
  curvature $\lambda_0$ vanishes. It is \textbf{pro-nilpotent} if
  $\lambda\in F_1S^1(L,L)$.
\end{definition}

A differential graded Lie algebra $\g^*$ gives rise to an
\Linf-algebra by setting $L=\g[1]$, with the discrete filtration
$F_0L=L$ and $F_1L=0$, and with
\begin{equation*}
  \lambda_n(x_1,\dots,x_n) =
  \begin{cases}
    (-1)^{|x_1|} \, [x_1,x_2] , & n=2 , \\
    0 , & n\ne2 .
  \end{cases}
\end{equation*}
For example, if $E$ is a complex of vector bundles on a manifold $M$
with differential $\delta$, and $D$ is a flat connection on $E$,
preserving degree, such that the covariant derivative of $\delta$
vanishes, then the total complex of the bicomplex
$\Omega^*(M,\End(E))$ of differential forms on $M$ with values in the
bundle of graded algebras $\End(E)$ and differentials $\ad(D)$ and
$\ad(\delta)$ is a differential graded Lie algebra, with bracket equal
to the graded commutator.

On the other hand, if the connection $D$ is not flat, but has
curvature
\begin{equation*}
  R \in \Omega^2(M,\End(E)) ,
\end{equation*}
then $\Omega^*(M,\End(E))$ is a curved differential graded Lie
algebra, with curvature $E$.

Curved \Linf-algebras are a common generalization of \Linf-algebras
and curved differential graded Lie algebras.

Given $a\in S^i(L,M)$ and $b\in S^0(K,L)$, define the composition
$a\bull b\in S^i(K,M)$ by the formula
\begin{multline*}
  (a\bull b)_n(x_1,\dots,x_n) = \sum_{\sigma\in S_n} (-1)^\eps \\
  \sum_{k=0}^n \frac{1}{k!} \sum_{n_1+\dots+n_k=n} \frac{1}{n_1!\dots n_k!} \,
  a_k(b_{n_1}(x_{\sigma_1},\dots),\dots,b_{n_k}(\dots,x_{\sigma_n})) .
\end{multline*}
It is in order for this operation be well-defined that we have imposed
the restriction that $b_0\in F_1L$.

The operation $a\bull b$ is associative: if $a\in S^i(L,M)$,
$b\in S^0(K,L)$ and $c\in S^0(J,K)$, then
\begin{equation*}
  (a\bull b)\bull c = a\bull(b\bull c) \in S^i(J,M) .
\end{equation*}
It has $1_L\in S^0(L,L)$ as a right inverse
\begin{equation*}
  a\bull 1_L = a ,
\end{equation*}
and $1_M$ as a left inverse
\begin{equation*}
  1_M\bull a = a .
\end{equation*}

We generalize the product $a\circ b$ as follows. Extend the ground
ring to the exterior algebra with a generator $\eps$ of degree
$-1$. If $a\in S^i(L,M)$, $b\in S^0(K,L)$ and $\beta\in S^1(K,L)$,
define $a\circ_b\beta$ by the formula
\begin{equation*}
  a\bull(b+\beta\,\eps) = a\bull b + (a\circ_b\beta)\,\eps .
\end{equation*}
In particular, if $K=L$, we have
\begin{equation*}
  a\bull(1_L+\beta\,\eps) = a + (a\circ\beta)\,\eps .
\end{equation*}
If $c\in S^0(J,K)$, then by the associativity of $\bull$, we have
\begin{equation*}
  (a\circ_b\beta)\bull c = a\circ_{b\bull c}(\beta\bull c) .
\end{equation*}
Also, we have
\begin{equation*}
  \delta(a\bull b) = (\delta a)\bull b + (-1)^{|a|} \, a\circ_b(\delta b) .
\end{equation*}

\begin{definition}
  Let $(L,\lambda)$ and $(M,\mu)$ be curved \Linf-algebras. An
  \textbf{\Linf-morphism} $\phi:L\to M$ (sometimes called a shmap)
  is an element $\phi\in S^0(L,M)$ satisfying the equation
  \begin{equation*}
    \delta\phi + \mu\bull\phi = \phi\circ\lambda .
  \end{equation*}
\end{definition}

\begin{proposition}
  The composition $\psi\bull\phi\in S^0(K,M)$ of two \Linf-morphisms
  $\phi\in S^0(K,L)$ and $\psi\in S^0(L,M)$ is an \Linf-morphism.
\end{proposition}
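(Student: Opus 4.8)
The plan is to verify the defining equation of an \Linf-morphism directly for the composite $\psi\bull\phi$, namely
\begin{equation*}
  \delta(\psi\bull\phi) + \mu\bull(\psi\bull\phi) = (\psi\bull\phi)\circ\kappa ,
\end{equation*}
where $\kappa$, $\lambda$, $\mu$ denote the curved \Linf-structures on $K$, $L$, $M$. (The composite is well defined because the constant term $\phi_0$ lies in $F_1L$.) I would begin from $\delta(\psi\bull\phi)$ and apply the Leibniz rule $\delta(a\bull b) = (\delta a)\bull b + (-1)^{|a|}\,a\circ_b(\delta b)$; since $\psi$ has degree $0$, this gives
\begin{equation*}
  \delta(\psi\bull\phi) = (\delta\psi)\bull\phi + \psi\circ_\phi(\delta\phi) .
\end{equation*}
The next step is to eliminate the two differentials using the morphism hypotheses $\delta\psi = \psi\circ\lambda - \mu\bull\psi$ and $\delta\phi = \phi\circ\kappa - \lambda\bull\phi$.

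To recombine the resulting four terms, I would rewrite $(\psi\circ\lambda)\bull\phi$ by means of the identity $(a\circ_b\beta)\bull c = a\circ_{b\bull c}(\beta\bull c)$, taken with $a=\psi$, $b=1_L$, $\beta=\lambda$ and $c=\phi$: since $\psi\circ\lambda = \psi\circ_{1_L}\lambda$ and $1_L\bull\phi = \phi$, this yields $(\psi\circ\lambda)\bull\phi = \psi\circ_\phi(\lambda\bull\phi)$. This exactly cancels the term $-\psi\circ_\phi(\lambda\bull\phi)$ produced by $\psi\circ_\phi(\delta\phi)$, using the linearity of $\circ_\phi$ in its second slot. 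Applying the associativity of $\bull$ to replace $(\mu\bull\psi)\bull\phi$ by $\mu\bull(\psi\bull\phi)$, the computation collapses to
\begin{equation*}
  \delta(\psi\bull\phi) = -\mu\bull(\psi\bull\phi) + \psi\circ_\phi(\phi\circ\kappa) .
\end{equation*}

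What remains is to identify $\psi\circ_\phi(\phi\circ\kappa)$ with $(\psi\bull\phi)\circ\kappa$, and I expect this to be the one genuinely delicate point, since it is where the twisted operation $\circ_\phi$ must be matched against the untwisted $\circ$. I would prove this \textbf{chain rule} through the $\eps$-calculus of the excerpt. On the one hand, $\phi\bull(1_K+\kappa\,\eps) = \phi + (\phi\circ\kappa)\,\eps$, so applying $\psi\bull({-})$ and reading off the coefficient of $\eps$ via $a\bull(b+\beta\,\eps) = a\bull b + (a\circ_b\beta)\,\eps$ produces $\psi\circ_\phi(\phi\circ\kappa)$. On the other hand, associativity of $\bull$ gives $\psi\bull\bigl(\phi\bull(1_K+\kappa\,\eps)\bigr) = (\psi\bull\phi)\bull(1_K+\kappa\,\eps) = (\psi\bull\phi) + \bigl((\psi\bull\phi)\circ\kappa\bigr)\,\eps$, whose $\eps$-coefficient is $(\psi\bull\phi)\circ\kappa$. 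Comparing the two expressions gives the chain rule, and substituting it into the last display yields precisely the morphism equation for $\psi\bull\phi$. Throughout, the sign bookkeeping is routine because $\phi$ and $\psi$ both have degree $0$, so every factor $(-1)^{|\phi|}$ or $(-1)^{|\psi|}$ is trivial; the only real content is the interplay of the Leibniz rule, associativity, and the chain rule.
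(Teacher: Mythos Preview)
Your argument is correct and follows essentially the same route as the paper's proof: Leibniz rule for $\delta(\psi\bull\phi)$, the morphism equations for $\psi$ and $\phi$, the identity $(\psi\circ\lambda)\bull\phi=\psi\circ_\phi(\lambda\bull\phi)$, and finally the chain rule $\psi\circ_\phi(\phi\circ\kappa)=(\psi\bull\phi)\circ\kappa$. The paper presents this as a four-line chain of equalities and simply asserts the chain rule at the last step, whereas you supply the $\eps$-calculus justification for it explicitly.
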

\begin{proof}
  We argue as follows:
  \begin{align*}
    \delta(\psi\bull\phi) + \mu\bull(\psi\bull\phi)
    &= (\delta\psi+\mu\bull\psi)\bull\phi +
      \psi\circ_\phi\delta\phi \\
    &= (\psi\circ\lambda)\bull\phi + \psi\circ_\phi\delta\phi \\
    &= \psi\circ_\phi(\delta\phi+\lambda\bull\phi) \\
    &= \psi\circ_\phi(\phi\circ\kappa) = (\psi\bull\phi)\circ\kappa .
      \qedhere
  \end{align*}
\end{proof}

Denote the category of curved \Linf-algebras and \Linf-morphisms by
$L_\infty$. The curved \Linf-algebra $0$ is a terminal object of this
category, and a point $x\in L_\infty(0,L)$ of a curved \Linf-algebra
is called a Maurer--Cartan element: in other words, $x$ is an element
of $F_1L$ such that
\begin{equation*}
  \lambda\bull x = \sum_{n=0}^\infty \, \frac{1}{n!} \,
  \lambda_n(x,\dots,x) = 0 .
\end{equation*}
The Maurer--Cartan set $\MC(L)$ is the set of all Maurer--Cartan
elements, in other words, the set of points $L_\infty(0,L)$ of $L$.
We see that $\MC$ is a functor from the category of curved
\Linf-algebras to the category of sets.

When $F_1L^0$ is finite-dimensional, $\MC(L)$ is an algebraic
subvariety of the affine space $F_1L^0$. When $L^0$ and $M^0$ are
finite-dimensional and $\phi:L\to M$ is an \Linf-morphism,
$\MC(\phi):\MC(L)\to\MC(M)$ is an algebraic morphism of affine
varieties.

Now suppose that $L$ is endowed with a homological perturbation theory
context
\begin{equation*}
  f:(M,d)\rightleftarrows(L,\delta):g ,
\end{equation*}
with homotopy $h:L\to L[-1]$. Given a curved \Linf-structure $\lambda$
on $L$, Fukaya defines a curved \Linf-structure $\mu$ on $M$, and an
\Linf-morphism $F:L\to M$ such that $gF=1_M$. The formulas for $\mu$
and $F$ are determined by the solution of a fixed-point problem: in
order for this solution to exist, we must assume that $\lambda$ is
pro-nilpotent.
\begin{theorem}[Fukaya]
  There is a unique solution in $S^0(M,L)$ of the fixed-point equation
  \begin{equation}
    \label{F}
    F = f - h\lambda\bull F .
  \end{equation}
  Furthermore, $\mu=g\lambda\bull F\in S^1(M,M)$ is a curved
  \Linf-structure on $M$, and $F$ is an \Linf-map from $(M,d,\mu)$ to
  $(L,\delta,\lambda)$.
\end{theorem}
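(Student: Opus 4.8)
The plan is to derive all three assertions from a single mechanism, contraction and pro-nilpotence on complete filtered spaces, organized through the $\eps$-calculus relating $\bull$ to its linearization $\circ_b$. First I would prove existence and uniqueness of $F$ by successive approximation: set $F^{(0)}=f$ and $F^{(n+1)}=f-h\lambda\bull F^{(n)}$. Because $\lambda$ is pro-nilpotent, $\lambda\in F_1S^1(L,L)$, and $h$ respects the filtration, the map $G\mapsto f-h\lambda\bull G$ strictly raises the filtration degree of differences: if $G-G'\in F_kS^0(M,L)$ then the difference of images lies in $F_{k+1}$, since every increment of $b\mapsto\lambda\bull b$ is measured by $\lambda\circ_b(-)$, which gains one filtration degree from $\lambda$. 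Completeness of $S^0(M,L)$ then gives a unique fixed point; one checks en route that $F_0\in F_1L$, so that $\lambda\bull F$ is defined.

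Write $\Lambda=\lambda\bull F$ and $\mu=g\Lambda$. Using that $f$ and $g$ are chain maps (so $\delta f=0$ and $g$ intertwines the internal differentials), the context identity $\delta h=1_L-fg$ for the internal differential of $h$, and the Leibniz rule $\delta(a\bull b)=(\delta a)\bull b+(-1)^{|a|}a\circ_b(\delta b)$, a direct computation gives $\delta F+\lambda\bull F=f\mu+h\,\delta\Lambda$ and $F\circ\mu=f\mu-h(\Lambda\circ\mu)$. Hence the \Linf-morphism defect $Q:=\delta F+\lambda\bull F-F\circ\mu$ satisfies $Q=h(\delta\Lambda+\Lambda\circ\mu)$.

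The crux is to re-express $\delta\Lambda+\Lambda\circ\mu$ through $Q$ itself. Associativity of $\bull$ together with the $\eps$-calculus yields the two chain-rule identities $\Lambda\circ\mu=\lambda\circ_F(F\circ\mu)$ and $(\lambda\circ\lambda)\bull F=\lambda\circ_F\Lambda$. Substituting these and the curved \Linf relation $\delta\lambda+\lambda\circ\lambda=0$ into $\delta\Lambda=(\delta\lambda)\bull F-\lambda\circ_F(\delta F)$ gives $\delta\Lambda+\Lambda\circ\mu=-\lambda\circ_F Q$. Combined with $Q=h(\delta\Lambda+\Lambda\circ\mu)$, this is the \emph{linear} fixed-point equation $Q=-h(\lambda\circ_F Q)$. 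Since $\circ_F$ is linear in its last slot and $\lambda\in F_1$, the operator on the right raises filtration degree, so its only fixed point is $Q=0$: the morphism equation holds and, moreover, $\delta\Lambda+\Lambda\circ\mu=0$. Applying the chain map $g$ to this last identity, and using $g\,\delta\Lambda=d\mu$ and $g(\Lambda\circ\mu)=\mu\circ\mu$, yields $d\mu+\mu\circ\mu=0$, so that $\mu$ is a curved \Linf-structure.

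I expect the main obstacle to be the bookkeeping forced by the nonlinearity of $\bull$ in its second argument: every step must be routed through the linearization $\circ_b$ and the square-zero parameter $\eps$, and the signs in the Leibniz rule and in the suspension conventions must be tracked carefully. Conceptually, however, a single idea does all the work, namely the pro-nilpotence of $\lambda$, which at once makes the iteration for $F$ converge and forces the defect $Q$ to vanish.
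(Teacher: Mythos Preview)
Your proposal is correct and follows essentially the same route as the paper. The paper names the two defects $\alpha=\delta F+\lambda\bull F-F\circ\mu$ and $\beta=d\mu+\mu\circ\mu$, asserts the identities $\alpha=-h(\lambda\circ_F\alpha)$ and $\beta=-g(\lambda\circ_F\alpha)$ ``by explicit calculation,'' and concludes by the contraction mapping theorem; your $Q$ is exactly $\alpha$, your intermediate identity $\delta\Lambda+\Lambda\circ\mu=-\lambda\circ_F Q$ is precisely what makes both of those equations hold (since $\beta=g(\delta\Lambda+\Lambda\circ\mu)$), and you have simply written out the calculation the paper omits.
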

\begin{proof}
  The existence and uniqueness of $F$ follows from the hypothesis of
  pro-nilpotence of $\lambda$, since the map $F\mapsto f-h\lambda F$
  is a contraction mapping.

  It remains to show the vanishing of the quantities
  \begin{align*}
    \alpha
    &= \delta\bull F + \lambda\bull F - F\circ\delta - F\circ\mu \in
      S^1(M,L) \intertext{and}
    \beta
    &= \delta\mu + \mu\circ\mu \in S^1(M,M) .
  \end{align*}
  This follows from the equations
  \begin{equation*}
    \begin{cases}
      \alpha = - h(\lambda\circ_F\alpha) & \\
      \beta = - g(\lambda\circ_F\alpha) &
    \end{cases}
  \end{equation*}
  which may be proved by explicit calculation. Applying the
  contraction mapping theorem once more, we see that $\alpha=0$ is the
  unique solution of the first equation, from which we conclude from
  the second equation that $\beta$ vanishes as well.
\end{proof}

It is now easy to prove Theorem~\ref{main}. Applying $g$ to both sides
of \eqref{F}, we see that $gF=1_M$. It is also clear that if $x$ is a
Maurer--Cartan element of $M$, that the Maurer--Cartan element
$F(x)\in\MC(L)$ satisfies the gauge condition $hF(x)=0$:
\begin{equation*}
  hF(x) = hf(x) - h^2\lambda\bull F(x)
\end{equation*}
vanishes by the side conditions $hf=0$ and $h^2=0$. It remains to
verify that if $x\in\MC(L,h)$, then $F(gx)=x$. In fact, an explicit
calculation shows that if $x$ is an element of $F_1L^0$ satisfying
$hx=0$, then
\begin{equation*}
  x - F(gx) = h\lambda(F(gx)) - h\lambda(x) .
\end{equation*}
The vanishing of $x-F(gx)$ follows by the contraction mapping theorem,
since the map $y\mapsto-h\lambda(y)$ is a contraction on $L^0$, by the
mean value theorem. To see that this map is a contraction, it suffices
to observe that its partial derivatives raise filtration degree at
every point.

\begin{bibdiv}
  \begin{biblist}

\bib{Bandiera}{thesis}{
   author={Bandiera, Ruggiero},
   title={Higher Deligne groupoids, derived brackets and deformation
     problems in holomorphic Poisson geometry},
   organization={University of Rome La Sapienza},
   date={January 2015},
   eprint={
http://www1.mat.uniroma1.it/ricerca/dottorato/TESI/ARCHIVIO/bandierarugero.pdf}
}

\bib{Berglund}{article}{
   author={Berglund, Alexander},
   title={Homological perturbation theory for algebras over operads},
   journal={Algebr. Geom. Topol.},
   volume={14},
   date={2014},
   number={5},
   pages={2511--2548},
}

\bib{DR}{article}{
   author={Dolgushev, Vasily A.},
   author={Rogers, Christopher L.},
   title={A version of the Goldman-Millson theorem for filtered
   $L_\infty$-algebras},
   journal={J. Algebra},
   volume={430},
   date={2015},
   pages={260--302},
}

\bib{Fukaya}{article}{
   author={Fukaya, Kenji},
   title={Deformation theory, homological algebra and mirror symmetry},
   conference={
      title={Geometry and physics of branes},
      address={Como},
      date={2001},
   },
   book={
      series={Ser. High Energy Phys. Cosmol. Gravit.},
      publisher={IOP, Bristol},
   },
   date={2003},
   pages={121--209},
}

\bib{linf}{article}{
   author={Getzler, Ezra},
   title={Lie theory for nilpotent $L_\infty$-algebras},
   journal={Ann. of Math. (2)},
   volume={170},
   date={2009},
   number={1},
   pages={271--301},
}

\bib{GL}{article}{
   author={Gugenheim, V. K. A. M.},
   author={Lambe, L. A.},
   title={Perturbation theory in differential homological algebra. I},
   journal={Illinois J. Math.},
   volume={33},
   date={1989},
   number={4},
   pages={566--582},
}

\bib{Kadeishvili}{article}{
   author={Kadeishvili, T. V.},
   title={The algebraic structure in the homology of an $A(\infty
   )$-algebra},
   language={Russian, with English and Georgian summaries},
   journal={Soobshch. Akad. Nauk Gruzin. SSR},
   volume={108},
   date={1982},
   number={2},
   pages={249--252 (1983)},
}

\bib{Kuranishi}{article}{
   author={Kuranishi, M.},
   title={On the locally complete families of complex analytic structures},
   journal={Ann. of Math. (2)},
   volume={75},
   date={1962},
   pages={536--577},
}

\bib{LV}{book}{
   author={Loday, Jean-Louis},
   author={Vallette, Bruno},
   title={Algebraic operads},
   series={Grundlehren der Mathematischen Wissenschaften [Fundamental
   Principles of Mathematical Sciences]},
   volume={346},
   publisher={Springer, Heidelberg},
   date={2012},
}

  \end{biblist}
\end{bibdiv}

\end{document}